\documentclass[10pt,letterpaper]{article}

\usepackage{graphicx}
\usepackage{amsmath}
\usepackage{amssymb}
\usepackage{amsthm}
\usepackage{authblk}


\usepackage[pdfborder={0 0 0},pagebackref=true,breaklinks=true,letterpaper=true,colorlinks,bookmarks=false]{hyperref}

\usepackage{verbatim}
\graphicspath{{images/}}

\newcommand\todo[1]{\textcolor{red}{#1}}
\renewcommand\todo[1]{}

\newcommand{\normtwo}[1] {\left\| #1 \right\|_2}
\newcommand{\normn}[1] {\left\| #1 \right\|_n}

\newtheorem{theorem}{Theorem}[section]
\theoremstyle{definition}
\newtheorem{definition}{Definition}[section]
\newtheorem{lemma}[theorem]{Lemma}

\begin{document}

\title{Lipschitz Continuity of Mahalanobis Distances and Bilinear Forms}

\author[1]{Emonet R\'emi\\
\url{remi.emonet@univ-st-etienne.fr}}
\author[1]{\\Zantedeschi Valentina \\
\url{valentina.zantedeschi@univ-st-etienne.fr}}
\author[1]{\\Sebban Marc\\
\url{marc.sebban@univ-st-etienne.fr}}

\affil[1]{
Univ Lyon, UJM-Saint-Etienne, CNRS, Institut d Optique Graduate School, Laboratoire Hubert Curien UMR 5516, F-42023, SAINT-ETIENNE, France
}

\maketitle

\begin{abstract}
    Many theoretical results in the machine learning domain stand only for functions that are Lipschitz continuous.
Lipschitz continuity is a strong form of continuity that linearly bounds the variations of a function.
In this paper, we derive tight Lipschitz constants for two families of metrics: Mahalanobis distances and bounded-space bilinear forms.
To our knowledge, this is the first time the Mahalanobis distance is formally proved to be Lipschitz continuous and that such tight Lipschitz constants are derived.

\end{abstract}

\section{Multi-variate Lipschitz continuity}

\theoremstyle{definition}
A function is said Lipschitz continuous if it takes similar values on points that are close. More precisely, the slope of the function is bounded by a constant that is independent of the choice of points.
This means that the variation of a function that is Lipschitz continuous within a certain interval is small. The Lipschitz continuity is a strong form of uniform continuity: for instance, a function that is Lipschitz continuous is also continuous, but the reverse is not necessarily true. Let's take the example of the square function: $x^2$ is continuous on $\mathbb{R}^m$ but it is not Lipschitz continuous (the slope of $x^2$ is not bounded).

We now consider the Lipschitz continuity for a function $f: \mathcal{X}^2 \subset \mathbb{R}^d \times \mathbb{R}^d \to \mathbb{R}$.
\begin{definition}{(Multi-variate Lipschitz continuity)}\label{def:lips}
A function $f: \mathcal{X}^2 \subset \mathbb{R}^d \times \mathbb{R}^d \to \mathbb{R}$ is said $k_n$-lipschitz w.r.t. the norm $ \normn{.} $ if $\forall (x_1, x_2, {x}'_1,{x}'_2) \in \mathcal{X}^4$:
\begin{equation}
    |f(x_1,x_2) - f({x}'_1,{x}'_2)| \leq k_n \normn{{x_1 \choose x_2} - {{x}'_1 \choose {x}'_2} } \:.
\end{equation}
\end{definition}

If $f$ is differentiable on $\mathcal{X}^2 \subset \mathbb{R}^d \times \mathbb{R}^d$ and $\mathcal{X}^2$ is a convex space, the best constant $k_n$, the characteristic Lipschitz coefficient, can be estimated considering the fact that
\begin{align} 
    k_n &= \sup_{x_1, x_2, {x}'_1,{x}'_2 \in \mathcal{X}} \Bigg(\frac{|f(x_1,x_2) - f({x}'_1,{x}'_2)|}{\normn{{x_1 \choose x_2} - {{x}'_1 \choose {x}'_2}}}\Bigg) = \nonumber \\
    &=\sup_{ x_1, x_2 \in \mathcal{X}} \normn{\nabla f(x_1,x_2)} \:.
\end{align}

\section{Derivation for Particular Functions}
\label{sec:lips}
    In this section, we analyze the Lipschitz continuity of two classic metric functions: the Mahalanobis distance and the bilinear form. These two functions are largely used in the field of Machine Learning, especially in Metric Learning. 

\subsection{Derivation for Mahalanobis-like Distances}
    We recall that the Mahalanobis distance of a pair $(x_1,x_2) \in \mathcal{X}^2$ can be written as $d_{M}(x_1,x_2) = \sqrt{(x_1-x_2)^T M (x_1-x_2)} $ where $M$ is some Positive Semi-Definite matrix, whose coefficients can be optimized.
    By Def. \ref{def:lips}, the function $d_{M}: \mathcal{X}^2 \to \mathbb{R}$ is $k_n$-lipschitz w.r.t. the norm $ \normn{.} $ if $\forall x_1, x_2 \in \mathcal{X}$, $ \normn{\nabla d_{M}(x_1,x_2)} $ can be bounded by a constant $k_n$, where 
    $$\nabla d_{M}(x_1,x_2) = {\frac{\partial d_{M}(x_1,x_2)}{\partial x_1} \choose \frac{\partial d_{M}(x_1,x_2)}{\partial x_2}}$$
    and, for this particular case:

    \begin{align}
        \frac{\partial d_{M}(x_1,x_2)}{\partial x_1} & = \frac{1}{2\sqrt{(x_1-x_2)^T M (x_1-x_2)}} \frac{\partial }{\partial x_1} \Big((x_1-x_2)^T M (x_1-x_2)\Big) \nonumber \\
        & = \frac{1}{2\sqrt{(x_1-x_2)^T M (x_1-x_2)}} \frac{\partial }{\partial x_1} \left(x_1^TMx_1-x_2^TMx_1-x_1^TMx_2+x_2^TMx_2\right) \label{eq:der} \\
        & = \frac{2Mx_1-Mx_2-Mx_2}{2\sqrt{(x_1-x_2)^T M (x_1-x_2)}} \label{eq:simp}\\ 
        & = \frac{Mx_1-Mx_2}{\sqrt{(x_1-x_2)^T M (x_1-x_2)}} = \frac{M(x_1-x_2)}{\sqrt{(x_1-x_2)^T M (x_1-x_2)}} \nonumber
    \end{align}
    and, in the same way:
    \begin{align}
        \frac{\partial d_{M}(x_1,x_2)}{\partial x_2} = \frac{M(x_2-x_1)}{\sqrt{(x_1-x_2)^T M (x_1-x_2)}} . \nonumber
    \end{align}
    In Eq. \ref{eq:der} and \ref{eq:simp} we made use of the symmetry of the matrix $M$.    
    \begin{lemma}
        The Mahalanobis distance $ d_{M}(x_1,x_2) $ is $k$-lipschitz w.r.t. the norm $ \normtwo{.} $, with $k = \sqrt{2}\left\|L\right\|_2$, where $M = L^T L$, with $L$ a lower triangular matrix.
    \end{lemma}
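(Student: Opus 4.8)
The plan is to invoke the gradient-norm characterization of the optimal constant established earlier, namely $k = \sup_{x_1,x_2 \in \mathcal{X}} \normtwo{\nabla d_{M}(x_1,x_2)}$, and to bound the right-hand side uniformly over all admissible pairs. The key structural observation is that the two partial derivatives computed just above satisfy $\frac{\partial d_{M}}{\partial x_2} = -\frac{\partial d_{M}}{\partial x_1}$, so the stacked gradient has squared norm
\[
    \normtwo{\nabla d_{M}(x_1,x_2)}^2 = \normtwo{\frac{\partial d_{M}}{\partial x_1}}^2 + \normtwo{\frac{\partial d_{M}}{\partial x_2}}^2 = 2\,\normtwo{\frac{\partial d_{M}}{\partial x_1}}^2 .
\]
This already isolates the factor $\sqrt{2}$ and reduces the whole problem to bounding a single block.

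Next I would set $u = x_1 - x_2$ and rewrite the surviving quantity as $\normtwo{\partial d_{M}/\partial x_1} = \normtwo{Mu}/\sqrt{u^T M u}$. Substituting $M = L^T L$ turns the denominator into $u^T L^T L u = \normtwo{Lu}^2$ and the numerator into $\normtwo{L^T L u}$, so the ratio becomes $\normtwo{L^T L u}/\normtwo{Lu}$. Setting $v = Lu$, this is at most $\sup_{v \ne 0}\normtwo{L^T v}/\normtwo{v} = \normtwo{L^T} = \normtwo{L}$, using the standard fact that the operator $2$-norm is invariant under transposition. Combining this with the $\sqrt{2}$ factor yields $\normtwo{\nabla d_{M}} \le \sqrt{2}\normtwo{L}$ pointwise, hence $k \le \sqrt{2}\normtwo{L}$.

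To argue the constant is tight and not merely an upper bound, I would exhibit a direction attaining it. Choosing $u$ to be the top right singular vector of $L$ (equivalently, the leading eigenvector of $M$) makes $v = Lu$ a scalar multiple of the corresponding left singular vector, for which $\normtwo{L^T v}/\normtwo{v} = \sigma_{\max}(L) = \normtwo{L}$; realizing this $u$ as $x_1 - x_2$ for admissible points then saturates the supremum. Equivalently, diagonalizing $M = Q\Lambda Q^T$ and writing $w = Q^T u$ recasts the squared ratio as the weighted average $\big(\sum_i \lambda_i\,(\lambda_i w_i^2)\big)/\big(\sum_i \lambda_i w_i^2\big) \le \lambda_{\max}(M) = \normtwo{L}^2$, with equality exactly when $w$ concentrates on the leading eigenspace.

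The main obstacle I anticipate is not the algebra but the degeneracy at $u^T M u = 0$: the gradient formula breaks down when $x_1 = x_2$, and more generally when $x_1 - x_2$ lies in the kernel of $M$, so strictly speaking $d_{M}$ fails to be differentiable there and the supremum must be interpreted over the set where the expression is well defined. I would handle this by taking the supremum over those points and extending the bound $\sqrt{2}\normtwo{L}$ to the boundary by continuity of $d_{M}$; one must also check that the tight direction chosen above avoids the kernel, which holds because $\lambda_{\max}(M)$ is strictly positive whenever $M \neq 0$.
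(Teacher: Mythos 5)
Your proposal follows essentially the same route as the paper: reduce to the supremum of the gradient norm, use the antisymmetry of the two partial derivatives to extract the factor $\sqrt{2}$, substitute $M = L^T L$, set $v = L(x_1-x_2)$, and bound $\normtwo{L^T v}/\normtwo{v}$ by the operator norm $\normtwo{L^T} = \normtwo{L}$. Your additions — the explicit tightness argument via the leading singular vector and the careful treatment of the degenerate case $(x_1-x_2)^T M (x_1-x_2) = 0$, where the gradient formula is undefined — are points the paper silently glosses over, and they strengthen rather than change the argument.
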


    \begin{proof}
        \begin{align}
            & \max_{ x_1, x_2 \in \mathcal{X}} \normtwo{\nabla d_{M}(x_1,x_2)} \nonumber \\
            & = \max_{ x_1, x_2 \in \mathcal{X}} \sqrt{ \normtwo{\frac{\partial d_{M}(x_1,x_2)}{\partial x_1}}^2 + \normtwo{\frac{\partial d_{M}(x_1,x_2)}{\partial x_2}}^2} \nonumber \\
            & = \max_{ x_1, x_2 \in \mathcal{X}} \sqrt{\normtwo{\frac{M(x_1-x_2)}{\sqrt{(x_1-x_2)^T M (x_1-x_2)}}}^2 + \normtwo{\frac{M(x_2-x_1)}{\sqrt{(x_1-x_2)^T M (x_1-x_2)}}}^2} \nonumber \\
            & = \max_{ x_1, x_2 \in \mathcal{X}} \sqrt{2\normtwo{\frac{M(x_1-x_2)}{\sqrt{(x_1-x_2)^T M (x_1-x_2)}}}^2} \nonumber \\
            & = \max_{ x_1, x_2 \in \mathcal{X}} \sqrt{2\normtwo{\frac{L^T L(x_1-x_2)}{\sqrt{(x_1-x_2)^T L^T L (x_1-x_2)}}}^2} \label{eq:cholesky} \\
            & = \max_{ x_1, x_2 \in \mathcal{X}} \sqrt{2\normtwo{\frac{L^T(L(x_1-x_2))}{\sqrt{(L(x_1-x_2))^T L(x_1-x_2)}}}^2} \nonumber \\
            & = \max_{ x_1, x_2 \in \mathcal{X}} \sqrt{2\normtwo{L^T\frac{(L(x_1-x_2))}{\normtwo{L(x_1-x_2)}}}^2} \nonumber \\
            & \leq \max_{ x_1, x_2 \in \mathcal{X}} \sqrt{2\normtwo{L^T}\normtwo{\frac{(L(x_1-x_2))}{\normtwo{L(x_1-x_2)}}}^2} \label{eq:cauchy}\\
            & \leq \sqrt{2}\normtwo{L^T} = k . \label{eq:norm}
        \end{align}

    In Eq. \ref{eq:cholesky} we applied the Cholesky decomposition $M = L^T L$, the bound in \ref{eq:cauchy} is due to the Cauchy-Schwarz inequality and in Eq. \ref{eq:norm} $\normtwo{\frac{(L(x_1-x_2))}{\normtwo{L(x_1-x_2)}}} = 1$ because it is a normalized vector.

    \end{proof}

\subsection{Derivation for Bilinear Forms}
    We recall that the bilinear form of a pair $(x_1,x_2)$ is computed as $ d_{M}(x_1,x_2) = x_1^T M x_2 $, where $M$ is a generic matrix that can be optimized. 
    Then:
    $$\frac{\partial d_{M}(x_1,x_2)}{\partial x_1}  = \frac{\partial }{\partial x_1} \left(x_1^T M x_2)\right) = Mx_2 $$
    
    $$\frac{\partial d_{M}(x_1,x_2)}{\partial x_2} = \frac{\partial }{\partial x_2} \left(x_1^T M x_2)\right) = M^Tx_1 . $$

    \begin{lemma}The bilinear similarity $ d_{M}(x_1,x_2) = x_1^T M x_2 $ is $k$-lipschitz w.r.t. the norm $ \normtwo{.} $, with $k = \sqrt{2} \normtwo{M} R$, when $\normtwo{x} \leq R$ $\forall x \in \mathcal{X}$.
    \end{lemma}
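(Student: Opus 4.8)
The plan is to follow the same route as the Mahalanobis lemma, exploiting the characterization $k = \sup_{x_1,x_2 \in \mathcal{X}} \normtwo{\nabla d_{M}(x_1,x_2)}$ together with the two partial derivatives already computed just above the statement, namely $\frac{\partial d_{M}}{\partial x_1} = Mx_2$ and $\frac{\partial d_{M}}{\partial x_2} = M^Tx_1$. First I would write the gradient norm blockwise as
\[
    \normtwo{\nabla d_{M}(x_1,x_2)} = \sqrt{\normtwo{Mx_2}^2 + \normtwo{M^Tx_1}^2},
\]
which reduces the problem to bounding each of the two terms separately.

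Next I would bound each term by submultiplicativity of the operator norm, which plays here exactly the role that Cauchy--Schwarz played in the Mahalanobis proof: $\normtwo{Mx_2} \leq \normtwo{M}\,\normtwo{x_2}$ and $\normtwo{M^Tx_1} \leq \normtwo{M^T}\,\normtwo{x_1}$. At this point I would invoke the bounded-domain hypothesis $\normtwo{x} \leq R$ for all $x \in \mathcal{X}$ to replace $\normtwo{x_1}$ and $\normtwo{x_2}$ by $R$, and the identity $\normtwo{M^T} = \normtwo{M}$ (the spectral norm is invariant under transposition) to unify the two coefficients. Substituting yields $\normtwo{\nabla d_{M}(x_1,x_2)} \leq \sqrt{\normtwo{M}^2 R^2 + \normtwo{M}^2 R^2} = \sqrt{2}\,\normtwo{M}\,R$, and taking the supremum over $\mathcal{X}^2$ gives the claimed constant $k = \sqrt{2}\,\normtwo{M}\,R$.

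Conceptually, the one point worth flagging is that, unlike the Mahalanobis case where the relevant vector $L(x_1-x_2)/\normtwo{L(x_1-x_2)}$ is already normalized and the bound is therefore independent of where the points sit, here the gradient grows linearly with the inputs. Consequently the boundedness assumption $\normtwo{x} \leq R$ is not a convenience but a necessity --- without it the bilinear form fails to be globally Lipschitz --- and it is precisely through this assumption that $R$ enters the constant. The main (modest) obstacle is thus not a computation but using the hypotheses correctly: the argument is otherwise a direct application of operator-norm submultiplicativity combined with the transpose invariance of $\normtwo{\cdot}$.
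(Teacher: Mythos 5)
Your proposal is correct and follows essentially the same route as the paper: the paper's proof likewise writes $\normtwo{\nabla d_M(x_1,x_2)} = \sqrt{\normtwo{Mx_2}^2 + \normtwo{M^Tx_1}^2}$ and bounds it by $\sqrt{2}\normtwo{M}R$, merely leaving implicit the submultiplicativity and $\normtwo{M^T}=\normtwo{M}$ steps that you spell out. Your added remark on why the boundedness hypothesis is essential is a useful clarification but not a different argument.
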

    \begin{proof}
        \begin{align}
            \max_{\forall x_1, x_2 \in U} \normtwo{\nabla d_{M}(x_1,x_2)} & = \max_{\forall x_1, x_2 \in U} \sqrt{ \normtwo{\frac{\partial d_{M}(x_1,x_2)}{\partial x_1}}^2 + \normtwo{\frac{\partial d_{M}(x_1,x_2)}{\partial x_2}}^2} \nonumber \\
            & = \max_{\forall x_1, x_2 \in U} \sqrt{\normtwo{Mx_2}^2 + \normtwo{M^Tx_1}^2} \nonumber \\
            & \leq \sqrt{2} \normtwo{M} R = k . \label{eq:max-r}
        \end{align}
    \end{proof}
\section{Conclusion}
\label{sec:concl}

In this paper, we recalled a method for proving the Lipschitz continuity and for finding a tight Lipschitz constant of multivariate differentiable functions.
Using this approach, we computed tight Lipschitz constants for two families of metrics that are heavily used, especially in metric learning.
We have shown that the Mahalanobis distance is Lipschitz continuous and has a constant of $\sqrt{2}\left\|L\right\|_2$ (where $L$ is the square root of the correlation matrix).
We have also shown that the bilinear form $xMy$ is Lipschitz continuous with a constant $\sqrt{2} \normtwo{M} R$ (when the space is bounded by $R$).

Many theoretical results in the machine learning domain rely on Lipschitz continuity and depend on the Lipschitz constants. For example, the generalization bounds obtained in the context of the uniform stability (see~\cite{bellet2015metric}) can be derived by constraining the studied functions to be Lipschitz continuous and the tightness of those bounds depends on the value of the Lipschitz constant.
The derivations from this paper have been originally developed to derive theoretical bounds for~\cite{zantedeschi2016c2lm}.
We believe these results can also be used to derive tighter theoretical bounds in other domains of machine learning.

{\small
\bibliographystyle{ieee}

}

\end{document}